\g@addto@macro{\UrlBreaks}{\UrlOrds}
\renewcommand{\UrlBreaks}{\do\/\do\a\do\b\do\c\do\d\do\e\do\f\do\g\do\h\do\i\do\j\do\k\do\l\do\m\do\n\do\o\do\p\do\q\do\r\do\s\do\t\do\u\do\v\do\w\do\x\do\y\do\z\do\A\do\B\do\C\do\D\do\E\do\F\do\G\do\H\do\I\do\J\do\K\do\L\do\M\do\N\do\O\do\P\do\Q\do\R\do\S\do\T\do\U\do\V\do\W\do\X\do\Y\do\Z}
\theoremstyle{plain}
\newtheorem{theorem}{Theorem}
\newtheorem{lemma}{Lemma}
\newcommand{\nequiv}{\not\equiv}
\let\abs=\envert
\let\vph=\varphi
\newcommand{\floor}[1]{\left\lfloor#1\right\rfloor}
\newcommand{\Mod}[1]{\ \left(\mathrm{mod}\ #1\right)}
\newcommand{\Q}{\mathbb{Q}}
\begin{document}
\title[Explicit improvements of the Brun-Titchmarsh theorem]
{Explicit improvements of the Brun-Titchmarsh theorem for arbitrary intervals}
\author{Tomohiro Yamada}
\keywords{Brun-Titchmarsh theorem.}
\subjclass{Primary 11N36, Secondary 11B83, 11N05, 11N13.}
\address{Center for Japanese language and culture, Osaka University,
562-8678, 3-5-10, Sembahigashi, Minoo, Osaka, Japan}
\email{tyamada1093@gmail.com}

\date{}

\begin{abstract}
We give an explicit version of Brun-Titchmarsh theorem applicable for arbitrary moduli and arbitrary intervals.
For example, we show that $\pi(x+y; k, a)-\pi(x; k, a)<2y/(\vph(k)(\log (y/k)+0.8601))$ for any relatively
prime positive integers $k$ and $a$ and any positive real numbers $x$ and $y$ such that $y>k$.
\end{abstract}

\maketitle

\section{Introduction}\label{intro}

Brun-Titchmarsh type theorems state that an inequality
\begin{equation}\label{eq10}
\pi(x+y; k, a)-\pi(x; k, a)<\frac{Cy}{\vph(k)\log(y/k)}
\end{equation}
holds with a constant $C$ uniformly for $k$ and $x$ provided only that $k<y$ and $\gcd(k, a)=1$.

In his epoch-making work \cite{Bru}, Brun used his sieve method to prove that the number of integers $n\leq y$ free of prime factors
below $y^{1/6}$ is between $y/\log y$ and $7y/\log y$ and therefore $\pi(y)<8y/\log y$ for sufficiently large $y$.
Brun's argument is not prevented from applying to the set of integers $n$ such that $x<n\leq x+y$ and we see that
there exists an absolute constant $x_0$ such that
\begin{equation}
\pi(x+y)-\pi(x)<\frac{8y}{\log y}
\end{equation}
for any real $x, y$ with $x\geq x_0$.
Montgomery and Vaughan \cite{MV} showed that \eqref{eq10} holds with $C=2$.
Maynard \cite{May} showed that
there exists an effectively computable constant $k_0$ such that
\begin{equation}
\pi(x; k, a)<\frac{2\operatorname{Li} x}{\vph(k)}
\end{equation}
for $k_0\leq k\leq x^{1/8}$.

For an arbitrary interval $(x, x+y]$, we write $\pi(x, x+y; k, a)=\pi(x+y; k, a)-\pi(x; k, a)$.
Montgomery and Vaughan \cite{MV} showed that there exists
an absolute constant $c$ for which we have
\begin{equation}\label{eq10b}
\pi(x, x+y; k, a)<\frac{2y}{\vph(k)(\log(y/k)+5/6)}
\end{equation}
provided that $y>ck$ without making the constant $c$ explicit.

Upper bounds for $\sup_x \pi(x+y)-\pi(x)$ have been studied by several authors such as
Hensley and Richards \cite{HR}, who showed that we can have
$\pi(x+20000)-\pi(x)>\pi(20000)$ for some $x$ if the prime $k$-tuple conjecture is true,
Clark and Jarvis \cite{CJ}, and Gordon and Rodemich \cite{GR}.
According to Engelsma's page \cite{Eng}, we have $\max_x(\pi(x+y)-\pi(x))=\pi(y)$ for $y\leq 2529$
but $\max_x(\pi(x+3159)-\pi(x))=447>\pi(3159)$ if the prime $k$-tuple conjecture is true.

The purpose of this paper is to give an explicit improvement of results of Montgomery and Vaughan
applicable for arbitrary moduli and arbitrary intervals.

We use inequalities of the type
\begin{equation}\label{eq11}
\sum_{q\leq z}g(q)=C_g(\log z+\eta_g)+E_g(z)
\end{equation}
with constants $C_g$, $\eta_g$, and $c_g$ and a function $E_g(z)$ satisfying $\abs{E_g(z)}<c_g/z^{1/2}$.

Let $k$ be a positive squarefree integer.
Alterman \cite{Alt} proved that, for any positive integer $k$ and any real $z\geq 1$,
\begin{equation}\label{eq12}
\sum_{q\leq z, \gcd(q, k)=1}\frac{\mu^2(q)}{\vph(q)}=\frac{\vph(k)}{k}(\log z+\eta_k)+E_k(z)
\end{equation}
with
\begin{equation}
\eta_k=\gamma+C+\sum_{p\mid k}\frac{\log p}{p}
\end{equation}
and $\abs{E_k(z)}<c_k/z^{1/2}$, where we take
\begin{equation}\label{eq12b}
c_k=4.4\delta_k\prod_{p\mid k}\left(1+\frac{p-2}{p^{3/2}-p-\sqrt{p}+2}\right)
\end{equation}
with $\delta_k=1$ if $k$ is odd and $0.493$ if $k$ is even.
For $k=1$, Ramar\'{e} \cite{Ram} proved that we can take $c_1=2.44$.

Alterman's result allows us to give the following explicit result for general moduli $k$.
\begin{theorem}\label{th11}
Let $k>0$ and $a$ be relatively prime integers.
For a real $z>0$, a positive integer $N$, and a real $y>k$, we define
\begin{equation}
\epsilon_{k, 1}(z)=\frac{\eta_k}{1+z}
+\left(\frac{1}{2}+\frac{\pi}{4}\right)\frac{k c_k}{\vph(k)z^{1/2}},
\end{equation}
\begin{equation}
\epsilon_{k, 2}(N)=\frac{\vph(k)(\log N+2\eta_k-\log (16/3))^2}{k\sqrt{16N/3}},
\end{equation}
\begin{equation}
\epsilon_{k, 3}(y)=\frac{k(\log (y/k)+2\eta_k-\log (16/3))^2}{y}
\end{equation}
respectively and
$\epsilon_k(y)=\epsilon_{k, 1}(\sqrt{3\floor{y/k}/4})+\epsilon_{k, 2}(\floor{y/k})+\epsilon_{k, 3}(y)$.

Then, for positive real numbers $x$ and $y$ with $y>k$, we have
\begin{equation}
\pi(x, x+y; k, a)\leq \frac{2y}{\vph(k)\left(\log(3ye^{2\eta_k}/(16k))-\epsilon_k(y)\right)}.
\end{equation}
\end{theorem}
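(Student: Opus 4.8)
The plan is to run Selberg's $\Lambda^2$ upper-bound sieve on the primes of $(x,x+y]$ lying in the class $a\bmod k$, substituting Alterman's estimate \eqref{eq12} at every place where the associated Selberg sum occurs.

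First I fix a level $\xi\ge 1$ (to be chosen later) and real numbers $(\lambda_d)$ with $\lambda_1=1$ supported on squarefree $d\le\xi$ with $\gcd(d,k)=1$; since $\gcd(a,k)=1$, every integer counted below is coprime to $k$, so no generality is lost. A prime $p>\xi$ with $x<p\le x+y$ and $p\equiv a\bmod k$ contributes $\bigl(\sum_{d\mid p}\lambda_d\bigr)^2=1$, while there are at most $\pi(\xi)$ primes $\le\xi$, so
\[
\pi(x,x+y;k,a)\le\pi(\xi)+\sum_{\substack{x<n\le x+y\\ n\equiv a\ (k)}}\Bigl(\sum_{\substack{d\mid n,\ d\le\xi\\ \gcd(d,k)=1}}\lambda_d\Bigr)^2.
\]
Opening the square and using that the number of $n\in(x,x+y]$ with $n\equiv a\ (k)$ and $[d_1,d_2]\mid n$ equals $\frac{y}{k[d_1,d_2]}+\theta_{d_1,d_2}$ with $|\theta_{d_1,d_2}|<1$, then taking the $\lambda_d$ that minimise $\sum_{d_1,d_2}\lambda_{d_1}\lambda_{d_2}/[d_1,d_2]$ — the minimum being $1/G_k(\xi)$, where $G_k(z):=\sum_{q\le z,\ \gcd(q,k)=1}\mu^2(q)/\varphi(q)$ is exactly the sum in \eqref{eq12}, with minimiser $\lambda_d=\mu(d)\frac{d}{\varphi(d)}G_{dk}(\xi/d)/G_k(\xi)$ and $G_{dk}$ the variant of $G_k$ carrying the extra condition $\gcd(q,d)=1$ — I obtain
\[
\pi(x,x+y;k,a)\le\pi(\xi)+\frac{y/k}{G_k(\xi)}+R,\qquad R:=\Bigl|\sum_{d_1,d_2}\lambda_{d_1}\lambda_{d_2}\theta_{d_1,d_2}\Bigr|.
\]

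For the main term I insert \eqref{eq12}: $G_k(\xi)=\frac{\varphi(k)}{k}(\log\xi+\eta_k)+E_k(\xi)$ with $|E_k(\xi)|<c_k/\xi^{1/2}$, $c_k$ as in \eqref{eq12b}. Turning this into a lower bound for $G_k(\xi)$ and doubling gives
\[
\frac{y/k}{G_k(\xi)}\le\frac{2y}{\varphi(k)\bigl(\log(\xi^2/4)+2\eta_k-\epsilon_{k,1}(\xi)\bigr)},
\]
where one deliberately weakens $\log\xi$ to $\log(\xi/2)$ in order to build in a cushion against $E_k(\xi)$; the precise shape of $\epsilon_{k,1}$ — with the term $\eta_k/(1+\xi)$ and the constant $\tfrac12+\tfrac{\pi}{4}$ — arises from carrying out this step with a sharpened lower bound for $G_k(\xi)$ (and, where needed, for the $G_{dk}$ occurring in the $\lambda_d$) obtained by Abel summation from \eqref{eq12}.

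It remains to absorb $R$ and $\pi(\xi)$. Bounding $R\le\bigl(\sum_{d\le\xi}|\lambda_d|\bigr)^2$ and estimating $\sum_{d\le\xi}|\lambda_d|$ from the closed form of $\lambda_d$ together with \eqref{eq12} gives, with explicit constants, $\sum_{d\le\xi}|\lambda_d|\ll\xi/\log\xi$, so $R\ll(y/k)/(\log(y/k))^2$, only a single logarithmic factor below the main term; after dividing through, $R$ is absorbed into the cushion already created above (the gap between $\log\xi^2$ and $\log(\xi^2/4)$), and one may equally use Montgomery and Vaughan's cancellation in $\sum_{d_1,d_2}\lambda_{d_1}\lambda_{d_2}\theta_{d_1,d_2}$. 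The contribution of $\pi(\xi)\le\xi$, folded into the fraction, costs at most $\epsilon_{k,2}(\floor{y/k})$ in the denominator. Now I choose $\xi^2=\tfrac34\floor{y/k}$, so that $[d_1,d_2]\le\xi^2<\floor{y/k}$ (every class modulo $k[d_1,d_2]$ then meets $(x,x+y]$) and $\log(\xi^2/4)=\log(3\floor{y/k}/16)$. Replacing $\floor{y/k}$ by $y/k$ inside the logarithm only enlarges the denominator, for which I pay an extra $\le\epsilon_{k,3}(y)$; setting $\epsilon_k(y)=\epsilon_{k,1}(\sqrt{3\floor{y/k}/4})+\epsilon_{k,2}(\floor{y/k})+\epsilon_{k,3}(y)$ and combining the displays yields the asserted bound, since $\log(3ye^{2\eta_k}/(16k))=\log(3y/(16k))+2\eta_k$.

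The decisive difficulty is the last step. Because the interval remainder $R$ is only one logarithmic factor below the main term, one cannot afford a lossy estimate: the bound for $\sum_{d\le\xi}|\lambda_d|$ (or, alternatively, a Montgomery--Vaughan cancellation for $\sum_{d_1,d_2}\lambda_{d_1}\lambda_{d_2}\theta_{d_1,d_2}$), together with Alterman's explicit $c_k$, must carry good enough constants that, once everything has been divided through, it all fits inside the stated $\epsilon_k$; and the numerical constants $\tfrac34$, $\tfrac{16}{3}$ and $\tfrac12+\tfrac{\pi}{4}$ have to be propagated exactly through the final fraction-folding so that the bound appears in precisely the claimed form rather than a looser one.
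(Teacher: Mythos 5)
There is a genuine gap, and it sits exactly where you flag "the decisive difficulty." Your plan runs the plain Selberg $\Lambda^2$ sieve and is left with the interval remainder $R=\bigl|\sum_{d_1,d_2}\lambda_{d_1}\lambda_{d_2}\theta_{d_1,d_2}\bigr|$, which even with the favourable bound $\sum_{d\le\xi}|\lambda_d|\ll\xi/\log\xi$ is of size $\asymp N/(\log N)^2$, only one logarithm below the main term. Absorbing a term of that size into the final fraction costs a \emph{constant} in the denominator, whereas the theorem only allows the loss $\epsilon_k(y)$, which tends to $0$ as $y/k\to\infty$. Your device for paying this constant is to weaken $\log\xi$ to $\log(\xi/2)$, i.e.\ to insert an artificial cushion of size $\frac{\varphi(k)}{k}\log 4$ in the denominator, which is again of size $\asymp N/(\log N)^2$ in the count; whether $R$ actually fits under that cushion is a delicate explicit-constant comparison that you never carry out, and a rough estimate (with $|\lambda_d|=\mu^2(d)\frac{d}{\varphi(d)}G_{dk}(\xi/d)/G_k(\xi)$, so $\bigl(\sum_{d\le\xi}|\lambda_d|\bigr)^2$ is roughly $1.5^2\,\xi^2/(\log\xi)^2$ against a cushion of about $(\log 4)\,\xi^2\cdot\frac{4}{3}/(\log\xi)^2$ at $k=1$) suggests it does not. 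The alternative you mention in passing, "Montgomery and Vaughan's cancellation," is not a cancellation in the $\theta_{d_1,d_2}$ that one can cite casually inside your framework; it is a different inequality proved by different (weighted large sieve) means, and invoking it is not compatible with the bookkeeping you set up.

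The paper avoids the remainder altogether: it applies Preissmann's explicit form of Corollary 1 of Montgomery--Vaughan, which gives the remainder-free bound $\#S\le N/H_g(z)$ with the \emph{weighted} sum $H_g(z)=\sum_{q\le z}(1+z/q)^{-1}g(q)$ at level $z=\sqrt{3N/4}$, so that $\pi(x,x+y;k,a)\le N/H_k(z)+\pi(z)$ with no $\theta$-terms to control. Lemma \ref{lm21} then transfers Alterman's estimate \eqref{eq12} for the unweighted sum to $H_k(z)$ by partial summation, and the constant $-\log 2$ (which, combined with $z^2=3N/4$, is the source of the $16/3$ and of the $\tfrac12+\tfrac{\pi}{4}$ in $\epsilon_{k,1}$) comes out of that exact evaluation rather than being inserted as a cushion; the terms $\epsilon_{k,2}$ and $\epsilon_{k,3}$ then absorb $\pi(z)\le z$ and the passage from $N$ to $y/k$, as in your last step. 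So your conversion of $N$ and $\pi(\xi)$ matches the paper, but the central quantitative step --- eliminating or genuinely dominating the sieve remainder at level $\xi^2=\tfrac34\lfloor y/k\rfloor$ --- is missing, and without the Montgomery--Vaughan/Preissmann inequality (or a worked-out explicit substitute) the argument does not yield the stated bound.
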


More explicitly, we have the following explicit estimate.
\begin{theorem}\label{th12}
Let $k\geq 1$ and $a$ be relatively prime integers.
Then, for any positive real numbers $x$ and $y$ with $y>k$, we have
\begin{equation}\label{eq13}
\pi(x, x+y; k, a)<\frac{2y}{\varphi(k)(\log (y/k)+0.8601)}.
\end{equation}
\end{theorem}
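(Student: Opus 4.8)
The plan is to deduce Theorem~\ref{th12} from Theorem~\ref{th11} wherever the latter is non-vacuous and strong enough, and to handle the remaining (small) range of $y/k$ directly. Since
\[
\log\!\left(\frac{3ye^{2\eta_k}}{16k}\right)=\log\frac{y}{k}+2\eta_k-\log\frac{16}{3},
\]
Theorem~\ref{th11} yields \eqref{eq13} as soon as its denominator is positive and
\[
\epsilon_k(y)\le 2\eta_k-\log\tfrac{16}{3}-0.8601 .
\]
So the first step is to pin down a rigorous lower bound for the constant $\gamma+C$ entering $\eta_k$, giving $2(\gamma+C)-\log(16/3)-0.8601\ge\Delta_0$ for an explicit $\Delta_0>0$ (numerically $\Delta_0\approx 0.13$); since $\eta_k=\gamma+C+\sum_{p\mid k}(\log p)/p$, the right-hand side above is in fact at least $\Delta_0+2\sum_{p\mid k}(\log p)/p$. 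Thus the available slack grows with the small prime factors of $k$, which will be essential.

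The second step is to verify, for $y/k$ above an explicit threshold $U_k$, that $\epsilon_k(y)\le\Delta_0+2\sum_{p\mid k}(\log p)/p$. Write $m=\lfloor y/k\rfloor$ and $A_k=2\eta_k-\log(16/3)$. One checks that $\epsilon_{k,3}(y)=(\log(y/k)+A_k)^2/(y/k)$ and $\epsilon_{k,2}(m)=\varphi(k)(\log m+A_k)^2/(k\sqrt{16m/3})$ are eventually decreasing and become comfortably small once $y/k$ exceeds a modest multiple of $A_k^2$ (the factor $\varphi(k)/k$ in $\epsilon_{k,2}$ helps). The binding term is $\epsilon_{k,1}(\sqrt{3m/4})$, whose second summand carries $kc_k/\varphi(k)$: bounding it by a fixed fraction of the slack forces $m$ of size roughly $(kc_k/(\varphi(k)A_k))^4$, so $U_k$ grows with $c_k$. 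This requires controlling $c_k$ and $k/\varphi(k)$ by explicit Mertens-type estimates for $\sum_{p\le x}(\log p)/p$, $\prod_{p\le x}p/(p-1)$ and $\prod_{p\le x}\bigl(1+\tfrac{p-2}{p^{3/2}-p-\sqrt p+2}\bigr)$, and checking that $U_k$ stays under control relative to the slack $2\sum_{p\mid k}(\log p)/p$. This gives \eqref{eq13} for $y/k\ge U_k$.

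The third step treats $1<y/k<U_k$. Here the primes $\equiv a\pmod k$ in an interval of length $y$ are pairwise $\ge k$ apart, so $\pi(x,x+y;k,a)\le\lfloor y/k\rfloor+1$; this is sharpened, when $k$ omits small primes, by the residue constraints they impose (for instance the primes lie in one class modulo $2k$ when $k$ is odd), and one may also replace $k$ by a proper divisor $k'$, since $\pi(x,x+y;k,a)\le\pi(x,x+y;k',a\bmod k')$ (advantageous when this costs little in $\varphi$, e.g.\ $\varphi(k)=\varphi(k/2)$ for $k$ even). Comparing with the right-hand side of \eqref{eq13}, one exploits that $k/\varphi(k)$ is large exactly when $k$ has many small prime factors — which is exactly when the crude count needs help — and clears the genuinely small remaining cases (and small $y$ with $k=1$, using data such as $\max_x(\pi(x+y)-\pi(x))=\pi(y)$ for $y\le2529$) by direct computation. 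I expect the main obstacle to be the interface between steps two and three: for moduli with many prime factors $c_k$, hence $U_k$, is large while $k/\varphi(k)$ grows only like $\log\log k$, so the naive count need not reach all the way up to $U_k$. Closing this gap is the crux, and I anticipate it requires running the Selberg sieve behind Theorem~\ref{th11} with a sub-optimal level parameter — trading the constant for a wider range of validity — together with the sharp estimate for the sieve remainder over an interval that is already encoded in the factor $16/3$ and the functions $\epsilon_{k,i}$; in other words, a controlled re-optimisation of Theorem~\ref{th11} rather than a black-box application.
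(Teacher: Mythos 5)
There is a genuine gap, and it sits exactly where you suspect, but it is both wider than you estimate and your proposed fix is not the one that works. First, the middle range is not a problem only for moduli with many prime factors: the trivial spacing bound $\pi(x,x+y;k,a)\le\lfloor y/k\rfloor+1$ already exceeds $2(y/k)/(\log(y/k)+0.8601)$ once $y/k>e^{2-0.8601}\approx 3.1$, even for $k=1$ or $k$ prime, where $k/\varphi(k)$ gives you essentially nothing. Meanwhile your threshold $U_k$ from Theorem \ref{th11} is enormous in absolute terms (for $k=1$ the error budget of about $0.17$ against $(\tfrac12+\tfrac\pi4)c_1/z^{1/2}$ plus the $\epsilon_{1,2},\epsilon_{1,3}$ terms already forces $y$ of order $10^6$ or more; the paper only gets the uniform statement from the asymptotic estimates for $y\ge 10^{10}k$, improved to $y\ge 669671k$ with refined numerics). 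So the interval $3\lesssim y/k\lesssim 7\cdot 10^5$ is left uncovered for every $k$, and neither residue-class refinements of the spacing bound (constant-factor gains) nor Engelsma-type data ($y\le 2529$, $k=1$ only) can bridge it. Your anticipated remedy, rerunning the Selberg sieve with a sub-optimal level, goes the wrong way: lowering $z$ only decreases $H_k(z)$ and weakens the main term, and the obstruction in this range is the $c_k/\sqrt z$ error in the asymptotic for $H_k(z)$, not the choice of level.

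The paper closes the gap with two ingredients absent from your outline. For the large and intermediate ranges it does not apply Theorem \ref{th11} with the $k$-dependent constant $c_k$ at all; it first invokes Lemma 3 of Montgomery--Vaughan, $H_k(z)\ge\frac{\varphi(k)}{k}H_1(z)$, which makes the whole analysis depend only on the $k=1$ error term and renders the threshold uniform in $k$ (so no trade-off between $c_k$ growth and the slack $2\sum_{p\mid k}(\log p)/p$ is ever needed), and then sharpens $\Delta_1(z)$ by piecewise numerical bounds on $E_1(t)$ (Lemma \ref{lm22}) to push the threshold down to $669671k$. For $14k\le y\le 669671k$ it abandons the Selberg sieve entirely and uses the Eratosthenes--Legendre sieve by the product $Q_r$ of the first $r$ primes, with the extremal remainders $B_{1,r}$ computed exactly over a period (Lemma \ref{lm23}), choosing $r\in\{2,3,10,16\}$ on subranges; this is what beats the trivial count by the needed logarithmic factor. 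Finally, for $k\le y<14k$ it uses a combinatorial observation you do gesture at but do not quantify: if the interval contains many primes in one class mod $k$ then all primes up to $11$ must divide $k$, so $Q_5\mid k$ and $\varphi(k)/k$ is small enough that even the count $\lfloor y/k\rfloor$ suffices. Without the comparison lemma and the explicit Legendre-sieve estimates, your plan cannot be completed as described.
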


For each moduli $k\leq 16$, we have the following explicit estimates.
\begin{theorem}\label{th13}
Let $k\leq 16$ and $a$ be integers such that $k\nequiv 2\Mod{4}$ and $\gcd(k, a)=1$.
Then, for any positive real numbers $x$ and $y$ with $y>k$, we have
\begin{equation}\label{eq14}
\pi(x, x+y; k, a)<\frac{2y}{\varphi(k)(\log (y/k)+\xi_k)}
\end{equation}
with a constant $\xi_k$ given in Table \ref{tbl1}.
\end{theorem}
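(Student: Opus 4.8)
The strategy is to deduce Theorem~\ref{th13} from Theorem~\ref{th11} once $y$ is large compared with $k$, and to dispose of the remaining bounded range of ratios $y/k$ separately. Since $\log\bigl(3ye^{2\eta_k}/(16k)\bigr)=\log(y/k)+\bigl(2\eta_k-\log(16/3)\bigr)$, Theorem~\ref{th11} already yields \eqref{eq14} with $\xi_k$ replaced by $a_k-\epsilon_k(y)$, where $a_k:=2\eta_k-\log(16/3)$; here $a_k>0$ for every $k$ because $\eta_k\ge\eta_1=\gamma+C$ and $2\eta_1>\log(16/3)$. It therefore suffices to prove
\[
\epsilon_k(y)\le a_k-\xi_k\qquad(y>k)
\]
for the tabulated $\xi_k$, and into this one substitutes the explicit $\eta_k$ of \eqref{eq12} and $c_k$ of \eqref{eq12b} (with $c_1=2.44$) for each of the twelve moduli $k\le 16$ with $k\nequiv 2\Mod{4}$.

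For the range of large $y$ I would use that each summand of $\epsilon_k$ is eventually monotone: $\epsilon_{k,1}$ is a sum of two functions decreasing in their argument, applied to the non-decreasing quantity $\sqrt{3\floor{y/k}/4}$; and $\epsilon_{k,2}(N)$ is decreasing once $\log N+a_k>4$, $\epsilon_{k,3}(y)$ once $\log(y/k)+a_k>2$, both of which hold as soon as $y/k\ge 21$ since $a_k>0.99$. Hence $\epsilon_k(y)\le\epsilon_k(kN)$ whenever $y\ge kN$ with $N\ge 21$, so it is enough to locate, for each $k$, the least integer $Y_k\ge 21$ with $\epsilon_k(kY_k)\le a_k-\xi_k$ — a single numerical evaluation per modulus — after which \eqref{eq14} holds for all $y\ge kY_k$.

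For $k<y<kY_k$ the ratio $y/k$ runs over a bounded set. When $y/k$ is small the trivial bound $\pi(x,x+y;k,a)\le\lceil y/k\rceil$ (at most one prime in each block of $k$ consecutive integers) already gives \eqref{eq14}, since it amounts to $\tfrac{\varphi(k)}{k}\cdot\tfrac{\lceil y/k\rceil}{y/k}\bigl(\log(y/k)+\xi_k\bigr)<2$, valid for $y/k$ below roughly $e^{2k/\varphi(k)-\xi_k}$; and whenever $\xi_k\le 0.8601$ the whole range follows from Theorem~\ref{th12}. What remains is an intermediate window of ratios $y/k$ on which $\epsilon_k(y)$ is too crude to apply Theorem~\ref{th11} directly. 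There I would re-run the Selberg sieve that underlies Theorem~\ref{th11}, but choose the sieving level, and carry out the remainder estimate, in a way tailored to this finite window (keeping the error far smaller than $\epsilon_k(y)$), and supplement it where useful with explicit bounds for the number of primes in an interval; one then verifies \eqref{eq14} over the finite set of ratios, splitting if convenient at the integers $y/k$ where $\lceil y/k\rceil$ jumps.

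The main obstacle is precisely this intermediate window: it can be several orders of magnitude wide (the coefficient $kc_k/\varphi(k)$ in $\epsilon_{k,1}$ grows quickly with the number of prime factors of $k$), the estimate must be uniform in $x$ and in $a$, and it must be sharp enough to reach the tabulated $\xi_k$, which forces a careful modulus-by-modulus optimization of the sieve parameters together with a nontrivial numerical check. By contrast, the first two steps amount only to the monotonicity analysis of $\epsilon_k$ and routine arithmetic.
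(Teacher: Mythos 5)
Your skeleton (Theorem~\ref{th11} for large $y$, a separate treatment of bounded $y/k$) matches the paper's, but the part you defer --- the ``intermediate window'' --- is essentially the entire content of the paper's proof, and your proposed way of closing it would not work. With Alterman's constants $c_k$ in \eqref{eq12b}, the term $\epsilon_{k,1}$ decays only like $(y/k)^{-1/4}$ with a sizable coefficient, while the slack $2\eta_k-\log(16/3)-\xi_k$ is small; so the threshold $Y_k$ produced by your monotonicity argument is of order $10^{10}$, not something modest, and the window reaches from $y/k$ in the tens up to $y\approx 10^{10}k$. The paper bridges it with two concrete devices absent from your proposal: first, Lemma~\ref{lm22} and Table~\ref{tbl2}, i.e.\ computationally verified piecewise bounds $\abs{E_k(t)}<c_{k,i}t^{-1/2}$ for $t\leq 10^8$, which via Lemma~\ref{lm21} give the sharpened remainder \eqref{eq34} with the constants $d_1,d_2$ of Table~\ref{tbl4} and extend the large-sieve bound down to $y_0\approx 10^6$; second, Lemma~\ref{lm23} with Tables~\ref{tbl3a}, \ref{tbl3b} and \ref{tbl5}, an Eratosthenes--Legendre count of integers coprime to $Q_{k,r}$ with explicitly computed constants $B_{k,r}$, giving $\pi(x,x+y;k,a)<\frac{\varphi(Q_{k,r})}{kQ_{k,r}}y+B_{k,r}+r$ on ranges $y_1\leq y\leq y_2$ reaching down to $y/k$ in the tens or low hundreds. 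Your alternative --- ``re-run the Selberg sieve with a sieving level tailored to the window'' --- misidentifies the bottleneck: the level $z=\sqrt{3N/4}$ is already the natural choice, and the loss comes from the error term $E_k(t)$ in evaluating $H_k(z)=\sum_{q\leq z,\,\gcd(q,k)=1}\mu^2(q)/\varphi(q)$; no choice of level removes that loss without numerical input on $E_k$, which is precisely what Lemma~\ref{lm22} supplies, and below $y_0$ the paper abandons the Selberg-type bound altogether in favour of Lemma~\ref{lm23}.

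The small-range step is also incorrect as written. The trivial bound $\lceil y/k\rceil$ does not give \eqref{eq14} near $y=k$ for several moduli: for $k=5$ an interval of length slightly more than $5$ can contain two integers $\equiv a\Mod{5}$, while the right-hand side of \eqref{eq14} is about $2\cdot 5/\bigl(4(0+1.4782)\bigr)\approx 1.7<2$; the configuration $x=1.5$, $y=5.5$, $a=2$ (primes $2$ and $7$) shows how delicate this regime is, and the paper handles it not by a trivial count but by divisibility/parity case analysis of the kind used at the end of the proof of Theorem~\ref{th12} together with direct computation (``confirm by calculation''), which your argument would have to reproduce --- indeed you should examine carefully how the cases with $y$ slightly above $k$ and the prime $2$ in the residue class are disposed of, since no counting bound alone settles them. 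Finally, your remark that Theorem~\ref{th12} covers the range whenever $\xi_k\leq 0.8601$ helps only for $k=1$, as every other tabulated $\xi_k$ exceeds $0.8601$.
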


\begin{table}
\caption{$\xi_k$ ($k\leq 16$)}
\begin{center}
\begin{small}
\begin{tabular}{| c | c |}
 \hline
$k$ & $\xi_k$ \\
\hline
$1$ & $0.8601$ \\
$3$ & $1.5864$ \\
$4$ & $1.5878$ \\
$5$ & $1.4782$ \\
$7$ & $1.3925$ \\
$8$ & $1.5878$ \\
$9$ & $1.5613$ \\
$11$ & $1.2765$ \\
$12$ & $2.3143$ \\
$13$ & $1.2385$ \\
$15$ & $2.2142$ \\
$16$ & $1.5857$ \\
 \hline
\end{tabular}
\label{tbl1}
\end{small}
\end{center}
\end{table}

Our argument proceeds along the line of Montgomery-Vaughan paper \cite{MV}.
We use Alterman's result for sufficiently large $y$, numerical estimates of $H_g(z)$ for moderately large $y$,
and Eratosthenes-Legendre sieve estimates for various finite sets of primes for small $y$.
We used PARI-GP for our calculation.
Our PARI-GP script files are available in
\url{https://drive.google.com/drive/folders/1KW72w7CUN6mkA8UMLnCB69iXsZrLFqhX}.

\section{Preliminaries}

\begin{lemma}\label{lm21}
Assume that \eqref{eq11} holds for any $z\geq 1$.
Then
\begin{equation}\label{eq21}
H_g(z)
=C_g\left(\log z-\log 2+\log\left(1+\frac{1}{z}\right)+\frac{z}{z+1}\eta_g\right)+\Delta_g(z),
\end{equation}
where
\begin{equation}
\Delta_g(z)=z\int_1^z \frac{E_g(t)}{(z+t)^2}dt.
\end{equation}
In particular, if we have $\abs{E_g(t)}<ct^{-1/2}$ for $1\leq t\leq z$, then we have \eqref{eq21} with
\begin{equation}\label{eq22}
\abs{\Delta_g(z)}<\left(\frac{1}{2}+\frac{\pi}{4}\right)\frac{c}{z^{1/2}}.
\end{equation}
\end{lemma}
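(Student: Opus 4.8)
The plan is to reduce both assertions to a single one–variable integral. Write $G_g(t)=\sum_{q\le t}g(q)$, so that \eqref{eq11} reads $G_g(t)=C_g(\log t+\eta_g)+E_g(t)$. The form of $\Delta_g(z)$ in the statement shows that $H_g(z)$ is obtained from the integral smoothing $z\int_1^z G_g(t)(z+t)^{-2}\,dt$ of the partial sums, together with an explicit elementary correction; so the first step is to substitute \eqref{eq11} and split
\[
z\int_1^z\frac{G_g(t)}{(z+t)^2}\,dt
 = C_g\,z\int_1^z\frac{\log t+\eta_g}{(z+t)^2}\,dt + z\int_1^z\frac{E_g(t)}{(z+t)^2}\,dt ,
\]
the second summand being exactly $\Delta_g(z)$. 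It then remains only to identify the first summand, plus the correction, with the main term of \eqref{eq21}.

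For that I would evaluate the two elementary integrals: $\int_1^z(z+t)^{-2}\,dt = \frac1{z+1}-\frac1{2z}$, and, after one integration by parts with $u=\log t$ and $dv=(z+t)^{-2}\,dt$,
\[
\int_1^z\frac{\log t}{(z+t)^2}\,dt = \frac1z\log\frac{z+1}{2} - \frac{\log z}{2z} .
\]
Multiplying through by $C_g z$, collecting the terms and adding the correction, the $\log z$– and $\eta_g$–contributions cancel in pairs and the surviving main term is $C_g\bigl(\log\frac{z+1}{2}+\frac{z}{z+1}\eta_g\bigr)$; since $\log\frac{z+1}{2}=\log z-\log 2+\log(1+1/z)$, this is the main term of \eqref{eq21}.

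For \eqref{eq22}, assume $|E_g(t)|<ct^{-1/2}$ on $[1,z]$; then $|\Delta_g(z)|<c\,z\int_1^z(z+t)^{-2}t^{-1/2}\,dt$. Here the substitution $t=zu^2$ is the right move: it turns the integral into $\frac{2}{\sqrt z}\int_{1/\sqrt z}^1(1+u^2)^{-2}\,du$, and with the antiderivative $\int(1+u^2)^{-2}\,du=\frac12\bigl(\frac{u}{1+u^2}+\arctan u\bigr)$, dropping the nonnegative contribution at the lower endpoint gives
\[
z\int_1^z\frac{dt}{(z+t)^2\sqrt t} < \frac{2}{\sqrt z}\int_0^1\frac{du}{(1+u^2)^2} = \frac{2}{\sqrt z}\cdot\frac12\Bigl(\frac12+\frac{\pi}{4}\Bigr) = \Bigl(\frac12+\frac{\pi}{4}\Bigr)\frac{1}{\sqrt z} ,
\]
which yields \eqref{eq22}.

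I expect the delicate point to be precisely this last estimate — getting the sharp constant $\frac12+\frac{\pi}{4}$ rather than the cruder $\pi/2$ that results from letting the $t$–integral run to $+\infty$ — which is what forces one to keep the bounded range of integration and use the exact antiderivative of $(1+u^2)^{-2}$. The main–term computation is otherwise routine; the only care needed there is to track the endpoint contributions at $t=1$ and $t=z$ together with the explicit correction, so that the discrete boundary term a crude partial summation would leave behind is accounted for and does not leak into $\Delta_g(z)$.
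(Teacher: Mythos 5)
Your proposal follows essentially the same route as the paper: the paper's proof writes $H_g(z)=\frac{G(z)}{2}+z\int_1^z G(t)(z+t)^{-2}\,dt$ by partial summation with the weight $(1+q/z)^{-1}$, substitutes \eqref{eq11}, evaluates the same elementary integrals (integration by parts for $\log t$ and $\int_1^z\frac{dt}{t(z+t)}$), and gets the constant $\frac{1}{2}+\frac{\pi}{4}$ from exactly your substitution $t=zu^2$ with the antiderivative of $(1+u^2)^{-2}$. The one point you leave implicit---the precise ``elementary correction'', which is the boundary term $G(z)/2$ produced by the partial summation---is handled no more carefully in the paper itself (whose proof silently drops the $E_g(z)/2$ part of that boundary term instead of folding it into $\Delta_g(z)$), so your write-up matches the paper's argument both in substance and in that small shared looseness.
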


\begin{proof}
Partial summation gives that
\begin{equation}
\sum_{w<q\leq z}\left(1+\frac{z}{q}\right)^{-1}g(q)
=\frac{G(z)}{2}-\frac{zG(w)}{z+w}+z\int_w^z \frac{G(t)}{(z+t)^2}dt
\end{equation}
for any real $w>0$.
It is clear that $G(w)=0$ for $0<w<1$.
Hence, taking $w\rightarrow 1-0$, we obtain
\begin{equation}\label{eq23}
\sum_{q\leq z}\left(1+\frac{z}{q}\right)^{-1}g(q)
=\frac{G(z)}{2}+z\int_1^z \frac{G(t)}{(z+t)^2}dt.
\end{equation}

We see that
\begin{equation}
\begin{split}
& \int_1^z \frac{G(t)}{(z+t)^2}dt=\int_1^z\frac{c_g(\log t+\eta_g)+E_g(t)}{(z+t)^2}dt \\
& =C_g\left(-\frac{\log z}{2z}+\int_1^z \frac{dt}{t(z+t)}
+\frac{\eta_g}{z+1}-\frac{\eta_g}{2z}\right)+z\int_1^z \frac{E_g(t)}{(z+t)^2}dt
\end{split}
\end{equation}
and the right-hand side of \eqref{eq23} is equal to
\begin{equation}
\begin{split}
& C_g\left(\frac{\log z+\eta_g}{2}+\frac{z\eta_g}{z+1}-\frac{\eta_g}{2}\right)+
z\int_1^z\frac{E_g(t)}{(z+t)^2}dt \\
& +C_g\left(-\frac{\log z}{2}+\log z+\log(z+1)-\log(2z)\right) \\
= ~ & C_g\left(\log z-\log 2+\frac{z\eta_g}{z+1}+\log\left(1+\frac{1}{z}\right)\right)
+z\int_1^z\frac{E_g(t)}{(z+t)^2}dt.
\end{split}
\end{equation}
Now \eqref{eq21} follows from \eqref{eq23}.

If $\abs{E_g(t)}<c_g t^{-1/2}$ for $1\leq t\leq z$, then we have
\begin{equation}
\begin{split}
\abs{\int_1^z\frac{E_g(t)}{(z+t)^2}dt}
< ~ & c_g\int_1^z\frac{1}{t^{1/2}(z+t)^2}dt=c_g z\int_{\sqrt{1/z}}^1 \frac{2}{(1+u^2)^2}du \\
< ~ & \frac{c_g}{z^{3/2}}\left(\frac{1}{2}+\frac{\pi}{4}\right),
\end{split}
\end{equation}
which immediately gives \eqref{eq22}.
\end{proof}

Moreover, we use the following estimate for $\abs{E_k(t)}$ in order to improve our result for particular moduli.

\begin{lemma}\label{lm22}
Let $z_{k, i}$ and $c_{k, i}$ be real numbers given in Table \ref{tbl1}.
Define the multiplicative function $g(n)$ by $g(p)=0$ for a prime factor of $k$ and $g(p)=1/(p-1)$ for any
other primes.
Then, for each $i$, we have $\abs{E_g(t)}<c_{k, i}/t^{1/2}$ in the range $z_{k, i}<t\leq 10^8$.
\end{lemma}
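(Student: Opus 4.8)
The plan is to verify the claimed inequality $\abs{E_g(t)} < c_{k,i}/t^{1/2}$ directly by numerical computation, exploiting the fact that on any interval between consecutive ``jump points'' of the summatory function the bound can be checked at finitely many points. Recall that, by the defining identity \eqref{eq11} applied to the function $g$ attached to the modulus $k$, we have $E_g(t) = \sum_{q \le t} g(q) - C_g(\log t + \eta_g)$, where $C_g = \vph(k)/k$ and $\eta_g = \eta_k$ by \eqref{eq12}. The partial sum $\sum_{q \le t} g(q)$ is a nondecreasing step function which is constant on each interval $[q_j, q_{j+1})$ between consecutive integers $q_j < q_{j+1}$ supporting $g$ (equivalently, consecutive squarefree integers coprime to $k$), while $-C_g \log t$ is strictly decreasing. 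Hence on each such interval $E_g(t) \cdot t^{1/2}$ achieves its maximum at the left endpoint $t = q_j$ and its minimum at the right endpoint $t \to q_{j+1}^-$; more precisely, since $t \mapsto (S - C_g(\log t + \eta_g)) t^{1/2}$ with $S$ constant is not monotone in general, one takes the supremum over the interval by elementary calculus (the derivative in $t$ vanishes at a single point, so the extreme values over $[q_j, q_{j+1})$ occur at the two endpoints and at that one interior critical point, all of which are given by closed formulas). It therefore suffices to evaluate these finitely many candidate values for every subinterval with $q_{j+1} \le 10^8$ that meets the range $t > z_{k,i}$, and confirm each is strictly less than $c_{k,i}$ in absolute value.

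The concrete steps are as follows. First, for each modulus $k$ in Table \ref{tbl1}, compute $C_g = \vph(k)/k$ and $\eta_k = \gamma + C + \sum_{p \mid k} (\log p)/p$ to sufficient precision (with rigorous error control on $\gamma$ and the constant $C$). Second, enumerate in increasing order the integers $q \le 10^8$ that are squarefree and coprime to $k$, maintaining a running partial sum of $g(q) = 1/\prod_{p \mid q}(p-1) \cdot \mu^2(q)$ — in fact $g$ on squarefree arguments coprime to $k$ equals $1/\vph(q)$, matching \eqref{eq12}. Third, on each maximal interval of constancy of the partial sum that intersects $(z_{k,i}, 10^8]$, bound $\abs{E_g(t)} t^{1/2}$ above by checking the endpoints and the unique interior stationary point, and compare with $c_{k,i}$. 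Fourth, record for each $i$ the threshold $z_{k,i}$ as (a lower bound for) the largest $t \le 10^8$ at which the sharper constant $c_{k,i}$ would be violated, so that the stated inequality holds precisely on $(z_{k,i}, 10^8]$; the table entries are then the output of this search. This computation is carried out with PARI-GP, and the script is available at the URL given in Section \ref{intro}.

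The main obstacle is not conceptual but one of rigor and scale: the interval $(0, 10^8]$ contains on the order of $10^8$ change points, so the verification must be organized to run in acceptable time (accumulating the partial sum incrementally rather than recomputing it), and — more importantly — the floating-point arithmetic must be controlled so that the strict inequality $\abs{E_g(t)} t^{1/2} < c_{k,i}$ is genuinely certified rather than merely observed. The safe way to handle this is to work with guaranteed rational or interval arithmetic for the partial sums (each term $1/\vph(q)$ is rational) and with a rigorous enclosure of $C_g \eta_k$, so that at each candidate point one compares a rigorously enclosed real number against the rational constant $c_{k,i}$. With that in place, the lemma reduces to a finite, mechanically checkable list of inequalities, one per interval of constancy, and the verification is complete once PARI-GP confirms that every entry in the list holds.
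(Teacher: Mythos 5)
Your proposal is correct and follows essentially the same route as the paper, whose proof of this lemma is simply the statement that the bound ``can be confirmed by calculation for each $k$'' (via the PARI-GP scripts referenced in Section \ref{intro}). Your additional detail --- reducing the check over real $t$ to endpoints and the single interior critical point on each interval of constancy of the partial sum, and insisting on rigorous rational/interval arithmetic --- is a sound way to organize exactly that computation, not a different method.
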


\begin{proof}
This can be confirmed by calculation for each $k$.
\end{proof}

\begin{table}
\caption{$z_{k, i}$ and $c_{k, i}$ ($1\leq k\leq 16$)}
\begin{center}
\begin{small}
\begin{tabular}{| c || c | c | c | c | c | c | c | c |}
 \hline
$i$ & $z_{1, i}$ & $c_{1, i}$ & $z_{3, i}$ & $c_{3, i}$ & $z_{4, i}$ & $c_{4, i}$
& $z_{5, i}$ & $c_{5, i}$ \\
\hline
$1$ & $1$ & $1.45061$ & $1$ & $1.13253$ & $1$ & $0.83958$ & $1$ & $1.32358$ \\
$2$ & $3$ & $0.98839$ & $2$ & $0.84093$ & $2$ & $0.67357$ & $2$ & $1.24182$ \\
$3$ & $6$ & $0.91695$ & $3$ & $0.79347$ & $4$ & $0.40417$ & $3$ & $0.75765$ \\
$4$ & $7$ & $0.69135$ & $11$ & $0.46685$ & $12$ & $0.37998$ & $8$ & $0.60256$ \\
$5$ & $31$ & $0.54701$ & $15$ & $0.45082$ & $14$ & $0.36253$ & $34$ & $0.51047$ \\
$6$ & $44$ & $0.45440$ & $59$ & $0.38171$ & $16$ & $0.30942$ & $38$ & $0.47760$ \\
$7$ & $66$ & $0.41267$ & $71$ & $0.32698$ & $24$ & $0.27671$ & $39$ & $0.41380$ \\
$8$ & $67$ & $0.39083$ & $231$ & $0.31436$ & $30$ & $0.27290$ & $58$ & $0.38126$ \\
$9$ & $382$ & $0.38645$ & $240$ & $0.29411$ & $32$ & $0.26966$ & $394$ & $0.36632$ \\
$10$ & $391$ & $0.27702$ & $242$ & $0.28882$ & $34$ & $0.23733$ & $398$ & $0.35655$ \\
$11$ & $394$ & $0.26180$ & $371$ & $0.22641$ & $178$ & $0.20207$ & $399$ & $0.30457$ \\
$12$ & $475$ & $0.25440$ & $383$ & $0.20769$ & $196$ & $0.16347$ & $418$ & $0.27417$ \\
$13$ & & & & & & & $427$ & $0.25234$ \\
 \hline
 \hline
$i$ & $z_{7, i}$ & $c_{7, i}$ & $z_{8, i}$ & $c_{8, i}$ & $z_{9, i}$ & $c_{9, i}$ & $z_{11, i}$ & $c_{11, i}$ \\
\hline
$1$ & $1$ & $1.38049$ & $1$ & $0.83958$ & $1$ & $1.13253$ & $1$ & $1.47043$ \\
$2$ & $2$ & $1.37832$ & $2$ & $0.67357$ & $2$ & $0.84093$ & $3$ & $0.83347$ \\
$3$ & $3$ & $0.81744$ & $4$ & $0.40417$ & $3$ & $0.79347$ & $6$ & $0.70664$ \\
$4$ & $7$ & $0.60080$ & $12$ & $0.37998$ & $11$ & $0.46685$ & $7$ & $0.65915$ \\
$5$ & $30$ & $0.57462$ & $14$ & $0.36253$ & $15$ & $0.45082$ & $16$ & $0.64471$ \\
$6$ & $31$ & $0.51152$ & $16$ & $0.30942$ & $59$ & $0.38171$ & $44$ & $0.58033$ \\
$7$ & $58$ & $0.41797$ & $24$ & $0.27671$ & $71$ & $0.32698$ & $48$ & $0.53503$ \\
$8$ & $59$ & $0.39983$ & $30$ & $0.27290$ & $231$ & $0.31436$ & $174$ & $0.52900$ \\
$9$ & $66$ & $0.39849$ & $32$ & $0.26966$ & $240$ & $0.29411$ & $175$ & $0.50272$ \\
$10$ & $102$ & $0.38475$ & $34$ & $0.23733$ & $242$ & $0.28882$ & $178$ & $0.45745$ \\
$11$ & $103$ & $0.32344$ & $178$ & $0.20207$ & $371$ & $0.22641$ & $179$ & $0.43724$ \\
$12$ & $231$ & $0.31685$ & $196$ & $0.16347$ & $383$ & $0.20769$ & $182$ & $0.43107$ \\
$13$ & & & & & & & $183$ & $0.42345$ \\
$14$ & & & & & & & $390$ & $0.41919$ \\
$15$ & & & & & & & $391$ & $0.29608$ \\
$16$ & & & & & & & $394$ & $0.29557$ \\
 \hline
 \hline
$i$ & $z_{12, i}$ & $c_{12, i}$ & $z_{13, i}$ & $c_{13, i}$ &
$z_{15, i}$ & $c_{15, i}$ & $z_{16, i}$ & $c_{16, i}$ \\
\hline
$1$ & $1$ & $0.68179$ & $1$ & $1.48780$ & $1$ & $1.07770$ & $1$ & $0.83958$ \\
$2$ & $2$ & $0.48806$ & $3$ & $0.88960$ & $2$ & $0.78154$ & $2$ & $0.67357$ \\
$3$ & $6$ & $0.23883$ & $6$ & $0.77437$ & $3$ & $0.64540$ & $4$ & $0.40417$ \\
$4$ & $20$ & $0.23782$ & $7$ & $0.62091$ & $14$ & $0.50584$ & $12$ & $0.37998$ \\
$5$ & $54$ & $0.19122$ & $30$ & $0.60731$ & $15$ & $0.43390$ & $14$ & $0.36253$ \\
$6$ & $56$ & $0.18577$ & $31$ & $0.55600$ & $48$ & $0.36069$ & $16$ & $0.30942$ \\
$7$ & $60$ & $0.18309$ & $44$ & $0.47905$ & $49$ & $0.29725$ & $24$ & $0.27671$ \\
$8$ & $116$ & $0.17556$ & $48$ & $0.46724$ & $63$ & $0.27314$ & $30$ & $0.27290$ \\
$9$ & $120$ & $0.14573$ & $102$ & $0.46040$ & $134$ & $0.25295$ & $32$ & $0.26966$ \\
$10$ & $121$ & $0.14092$ & $103$ & $0.39232$ & & & $34$ & $0.23733$ \\
$11$ & & & $155$ & $0.36274$ & & & $178$ & $0.20207$ \\
$12$ & & & $202$ & $0.32107$ & & & $196$ & $0.16347$ \\
$13$ & & & $203$ & $0.31849$ & & & & \\
$14$ & & & $210$ & $0.30259$ & & & & \\
$15$ & & & $211$ & $0.28866$ & & & & \\
$16$ & & & $394$ & $0.26053$ & & & & \\
\hline
\end{tabular}
\label{tbl2}
\end{small}
\end{center}
\end{table}

\begin{lemma}\label{lm23}
Let $\Pi(z; k, a, Q)$ be the number of positive integers $n\leq z$ congruent to $a\Mod{k}$
and relatively prime to $Q$ for an integer $Q\geq 1$ and a real number $z>0$
and $\Pi(x, x+y; k, a, Q)=\Pi(x+y; k, a, Q)-\Pi(x; k, a, Q)$.
Moreover, we put $Q_r$ to be the product of the first $r$ primes and $Q_{k, r}=Q_r/\gcd(Q_r, k)$.
Then, for each integer $a$ and any positive real numbers $x$ and $y$, we have
\begin{equation}\label{eq24}
\Pi(x, x+y; k, a, Q_{k, r})<\frac{\vph(Q_{k, r})}{kQ_{k, r}}y+B_{k, r}
\end{equation}
where $B_{k, r}$'s are constants given in Tables \ref{tbl3a} and \ref{tbl3b}
and $B_{k, r}=B_{\ell, r}$ if $Q_{k, r}=Q_{\ell, r}$.
Furthermore, if $Q\geq 1$ is a squarefree integer whose largest primes factor is the $r$-th prime,
then
\begin{equation}
\Pi(x, x+y; k, a, Q)<\frac{\vph(Q)y}{Qk}+2a_i<\frac{\vph(Q_i)}{Q_i}\frac{y}{\vph(k)}+2B_{1, r}.
\end{equation}
\end{lemma}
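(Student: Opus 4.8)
The plan is to combine the Chinese Remainder Theorem --- available here because $\gcd(k,Q_{k,r})=1$ by construction --- with the Eratosthenes--Legendre sieve and the periodicity of sifted progressions, so that each constant $B_{k,r}$ becomes the outcome of a finite optimization. Write $Q=Q_{k,r}$. For any divisor $d\mid Q$ the conditions $n\equiv a\Mod k$ and $d\mid n$ are jointly equivalent to a single congruence modulo $kd$, so the number of $n\in(x,x+y]$ satisfying both is $y/(kd)$ with an error of absolute value less than $1$; weighting by $\mu(d)$ and summing gives
\begin{equation}
\Pi(x,x+y;k,a,Q)=\frac{y}{k}\sum_{d\mid Q}\frac{\mu(d)}{d}+E=\frac{\vph(Q)}{kQ}\,y+E .
\end{equation}
Bounding $E$ termwise only yields $E<2^{\omega(Q)}$, which is far too weak; instead I would observe that the sifted progression $S=\{n\in\mathbb{Z}:n\equiv a\Mod k,\ \gcd(n,Q)=1\}$ is periodic with period $kQ$ and contains exactly $\vph(Q)$ elements per period, since as $n$ ranges over the $Q$ residues $\equiv a\Mod k$ modulo $kQ$ the residue $n\bmod Q$ ranges once over all of $\mathbb{Z}/Q$, of which $\vph(Q)$ are units. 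Thus on any union of full periods the count equals density times length exactly, and $|E|$ is at most the one-period discrepancy of $S$.

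The reason this discrepancy, and hence $B_{k,r}$, depends only on $Q_{k,r}$ and not on $a$ or $k$ is the substitution $n=a+km$: as $\gcd(k,Q)=1$, multiplication by $k$ permutes the reduced residues modulo $Q$, so $\gcd(a+km,Q)=1$ precisely when $m$ lies in a fixed translate of $\{m:\gcd(m,Q)=1\}$, while the interval $(x,x+y]$ in $n$ corresponds to an interval of length $y/k$ in $m$ and translating a window does not change its discrepancy. One is therefore reduced to
\begin{equation}
\Pi(x,x+y;k,a,Q)\le\frac{\vph(Q)}{kQ}\,y+B(Q),\qquad B(Q)=\sup_{J}\Bigl(\#\{m\in J:\gcd(m,Q)=1\}-\tfrac{\vph(Q)}{Q}\,|J|\Bigr),
\end{equation}
the supremum over intervals $J\subset\mathbb{R}$. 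Since $\{m:\gcd(m,Q)=1\}$ has period $Q$, enlarging $J$ by a full period raises both the count and $\tfrac{\vph(Q)}{Q}|J|$ by $\vph(Q)$, so $B(Q)$ equals the maximum of $\#\{m\in[c,c']:\gcd(m,Q)=1\}-\tfrac{\vph(Q)}{Q}(c'-c)$ over integers $c\le c'$ coprime to $Q$ with $c'-c\le Q$ --- a finite maximum, which I would evaluate for each relevant $k$ and $r$ in PARI-GP to produce Tables \ref{tbl3a} and \ref{tbl3b}. The identity $B_{k,r}=B_{\ell,r}$ whenever $Q_{k,r}=Q_{\ell,r}$ is then immediate.

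For the last assertion, take $Q$ squarefree with largest prime factor the $r$-th prime, and assume $\gcd(a,k)=1$ (for $\gcd(a,Q)>1$ the count vanishes). Every prime dividing $\gcd(Q,k)$ divides $k$ and so cannot divide $n\equiv a\Mod k$, whence $\Pi(x,x+y;k,a,Q)=\Pi(x,x+y;k,a,Q')$ with $Q'=Q/\gcd(Q,k)$; here $Q'$ is coprime to $k$, and $Q'\mid Q_{k,r}\mid Q_r$. By the previous paragraph, $\Pi(x,x+y;k,a,Q')\le\tfrac{\vph(Q')}{kQ'}y+B(Q')$, and since $k\ge\vph(k)$ this is at most $\tfrac{\vph(Q')}{Q'}\cdot\tfrac{y}{\vph(k)}+B(Q')$; the stated two-step inequality then follows, with room to spare in the factor $2$, once $B(Q')\le B(Q_r)=B_{1,r}$. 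The step I expect to be the main obstacle is exactly this monotonicity: that $N_1\mid N_2$ forces $B(N_1)\le B(N_2)$. I would prove it one prime at a time --- passing from $N$ to $Np$ with $p\nmid N$ deletes the multiples of $p$ from the coprime set and multiplies the density by $1-1/p$ --- by averaging the number of deleted points over the $p$ translates of a near-extremal window by multiples of $N$: some translate loses no more than the average $\tfrac1p$ of its fair share, so the discrepancy cannot decrease, and iterating over the primes of $Q_r/Q'$ finishes. Everything else is bookkeeping with one-sided error terms.
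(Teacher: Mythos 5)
For the main inequality \eqref{eq24} your route is essentially the paper's: you transfer the condition $n\equiv a\Mod{k}$, $\gcd(n,Q_{k,r})=1$ on $(x,x+y]$ to counting, in an interval of length $y/k$, the points of a translate of the set coprime to $Q_{k,r}$ (the paper does the same thing via $\Pi(z;k,0,Q_{k,r})=\Pi(z/k;Q_{k,r})$ together with a shift $sQ_{k,r}\equiv-a\Mod{k}$), and you let a finite computation over one period produce the constants; your finite maximum over integer windows with coprime endpoints is exactly what the paper's computed extremes of $\Pi(N;Q_{k,r})-\frac{\vph(Q_{k,r})}{Q_{k,r}}N$ encode. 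That part, including the independence of $a$ and the identity $B_{k,r}=B_{\ell,r}$ when $Q_{k,r}=Q_{\ell,r}$, is sound.

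The gap is exactly where you flagged it, and your proposed fix does not work. Averaging the deleted points over the $p$ translates $J+tN$ of a near-extremal window only shows that some translate keeps at least a fraction $1-\frac1p$ of the points of $S_N\cap J$ (each point has exactly one translate divisible by $p$); since the density also drops by the factor $1-\frac1p$, the surviving excess is only $\ge\bigl(1-\frac1p\bigr)\bigl(\#(S_N\cap J)-\frac{\vph(N)}{N}\abs{J}\bigr)$. To conclude that ``the discrepancy cannot decrease'' you would need a translate losing at most $\frac1p\cdot\frac{\vph(N)}{N}\abs{J}$ points, whereas the average loss is $\frac1p\#(S_N\cap J)$, which is strictly larger for a window of positive discrepancy. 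So your argument gives $B(Np)\ge(1-\frac1p)B(N)$, hence after iteration only $B(Q')\le\frac{Q_r/Q'}{\vph(Q_r/Q')}B(Q_r)$, with a loss factor as large as $\prod_{p\le 29}\frac{p}{p-1}\approx 6.3$ at $r=10$; this yields neither the monotonicity $B(Q')\le B_{1,r}$ you claim ``with room to spare'' nor even the stated bound with $2B_{1,r}$. To repair it you would need a genuinely different proof of monotonicity, or, more in the spirit of the paper, a direct finite computation of $B(d)$ for the divisors $d\mid Q_r$ actually used. (For fairness: the paper's printed proof is itself silent on this ``furthermore'' clause, whose statement with $a_i$ and $Q_i$ is garbled, so you are attempting more than the paper establishes; but the averaging step as written is incorrect.)
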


\begin{proof}
For each modulus $k$ given in Tables \ref{tbl3a} and \ref{tbl3b}, we confirmed by calculation that
\begin{equation}
A_{k, r}+\frac{\vph(Q_{k, r})}{Q_{k, r}}\leq\Pi(N; Q_{k, r})-\frac{\vph(Q_{k, r})}{Q_{k, r}}N
\leq A_{k, r}+B_{k, r},
\end{equation}
where $A_{k, r}$'s are constants given in Tables \ref{tbl3a} and \ref{tbl3b} and
each equality holds for a certain integer $N$ and $\Pi(z; Q)=\Pi(z; 1, 0, Q)$.
Hence,
\begin{equation}
A_{k, r}<\Pi(z; Q_{k, r})-\frac{\vph(Q_{k, r})}{Q_{k, r}}z\leq A_{k, r}+B_{k, r},
\end{equation}
where both inequalities are strict.
We observe that $\Pi(z; k, 0, Q_{k, r})=\Pi(z/k; Q_{k, r})$ whenever $\gcd(Q_{k, r}, k)=1$ and therefore
\begin{equation}
A_{k, r}<\Pi(z; k, 0, Q_{k, r})-\frac{\vph(Q_{k, r})}{kQ_{k, r}}z\leq A_{k, r}+B_{k, r},
\end{equation}
which yields that
\begin{equation}
\Pi(x, x+y; k, 0, Q_{k, r})<\frac{\vph(Q_{k, r})}{kQ_{k, r}}y+B_{k, r}
\end{equation}
for any positive real numbers $x$ and $y$.

Now we observe that
\begin{equation}
\Pi(x, x+y; k, a, Q_{k, r})<\frac{\vph(Q_{k, r})}{kQ_{k, r}}y+B_{k, r}
\end{equation}
holds for each congruent class $a\Mod{k}$ and any positive real numbers $x$ and $y$.
Indeed, for each integer $b$, taking an integer $s$ such that $sQ_{k, r}\equiv -a\Mod{k}$, we have
\begin{equation}
\begin{split}
~ & \sup_{x\geq 0} \Pi(x, x+y; k, a, Q_{k, r}) \\
= ~ & \sup_{\alpha\geq -a/k} \# \{n: \alpha<n\leq \alpha+y/k, \gcd(kn+a, Q_{k, r})=1\} \\
= ~ & \sup_{\alpha\geq 0} \# \{n: \alpha<n\leq \alpha+y/k, \gcd(kn+(sQ_{k, r}+a), Q_{k, r})=1\} \\
= ~ & \sup_{\alpha\geq (sQ_{k, r}+a)/k} \# \{n: \alpha<n\leq \alpha+y/k, \gcd(kn, Q_{k, r})=1\} \\
= ~ & \sup_{\alpha\geq 0} \# \{n: \alpha<n\leq \alpha+y/k, \gcd(kn, Q_{k, r})=1\} \\
= ~ & \sup_{x\geq 0} \Pi(x, x+y; k, 0, Q_{k, r})=B_{k, r},
\end{split}
\end{equation}
which gives \eqref{eq24} for each congruent class $a\Mod{k}$.
Since $B_{k, r}$ depends only on $Q_{k, r}$, we have $B_{k, r}=B_{\ell, r}$ if $Q_{k, r}=Q_{\ell, r}$.
This proves the lemma.
\end{proof}

\section{Proof of the theorem}

Let $M$ and $N$ be positive integers.
For each prime $p$, let $\rho(p)$ distinct congruent classes $a_{p, 1}, \ldots, a_{p, \rho(p)}\Mod{p}$ be given.
Assume that a set $S$ of integers in the interval $[M+1, M+N]$ contains no integer
in distinct congruent classes $a_{p, 1}, \ldots, a_{p, \rho(p)}\Mod{p}$ for each prime $p$.

Using the result of \cite{Pre} based on Corollary 1 of \cite{MV}, we have
\begin{equation}\label{eq31}
\#S \leq\frac{N}{H_g(z)}
\end{equation}
with $z=\sqrt{(3/4)N}$ for $N\geq 2$,
where $g(n)$ is the squarefree-supported multiplicative function defined by $g(p)=\rho(p)/(p-\rho(p))$ for each prime $p$.

Let $M=\floor{(x-a)/k}$ and $N=\floor{(x+y-a)/k}-\floor{(x-a)/k}$.
If $p$ is a prime in the interval $\max\{x, z\}<p\leq x+y$ and $p\equiv a\Mod{k}$, then
$p=kn+a$ with $M<k\leq M+N$ and $kn+a\nequiv 0\Mod{p^\prime}$ for any prime $p^\prime\leq z$.
Thus, we can take $g(n)=g_k(n)$ to be the squarefree-supported multiplicative function defined like above
with $\rho(p)=1$ if $p\nmid k$ and $\rho(p)=0$ if $p\mid k$ and
\begin{equation}
H_{g_k}(z)=\sum_{q\leq z, \gcd(q, k)=1}\frac{\mu^2(q)}{\vph(q)}
\end{equation}
is equal to the left-hand side of \eqref{eq12}.

Henceforth we write $H_k(z)=H_{g_k}(z)$ and so on.
Now \eqref{eq31} yields that
\begin{equation}\label{eq32}
\pi(x, x+y; k, a)\leq \frac{N}{H_k(z)}+ \pi(z).
\end{equation}
Using Lemma \ref{lm21} with Alterman's estimate \eqref{eq12}-\eqref{eq12b}, we obtain
\begin{equation}
\begin{split}
H_k(z)> & ~ \frac{\vph(k)}{k}\left(\log z-\log 2+\frac{z}{z+1}\eta_k\right)
-\left(\frac{1}{2}+\frac{\pi}{4}\right)\frac{c_k}{z^{1/2}} \\
> & ~ \frac{\vph(k)}{2k}\left(\log N+2\eta_k-\log (16/3)-\epsilon_{k, 1}(z)\right).
\end{split}
\end{equation}
Hence, \eqref{eq31} and \eqref{eq32} yield that
\begin{equation}
\begin{split}
& \pi(x, x+y; k, a)\leq\frac{N}{H_k(z)}+z \\
& <\frac{2kN}{\vph(k)\left(\log N+2\eta_k-\log (16/3)-\epsilon_{k, 1}(z)-\epsilon_{k, 2}(N)\right)}.
\end{split}
\end{equation}
Observing that $\floor{y/k}\leq N<\floor{y/k}+1$, we have
\begin{equation}
\pi(x, x+y; k, a)\leq \frac{2y}{\vph(k)\left(\log(3ye^{2\eta_k}/(16k))-\epsilon_k(y)\right)}.
\end{equation}
This proves Theorem \ref{th11}.

We prove Theorem \ref{th12}.
Using Lemma 3 of \cite{MV}, we see that
\begin{equation}
H_k(z)\geq \frac{\vph(k)}{k}H_1(z).
\end{equation}
Hence, we obtain
\begin{equation}
\begin{split}
& \pi(x, x+y; k, a)\leq\frac{kN}{\vph(k)H_1(z)}+z \\
& <\frac{2kN}{\vph(k)\left(\log N+2\eta_1-\log (16/3)-\epsilon_{1, 1}(z)-\epsilon_{k, 2}^\prime(N)\right)},
\end{split}
\end{equation}
where
\begin{equation}
\epsilon_{k, 2}^\prime(N)=\frac{\vph(k)(\log N+2\eta_1-\log (16/3))^2}{k\sqrt{16N/3}},
\end{equation}
and therefore
\begin{equation}
\pi(x, x+y; k, a)\leq \frac{2y}{\vph(k)\left(\log(3ye^{2\eta_1}/(16k))-\epsilon_k^\prime(y)\right)},
\end{equation}
where
$\epsilon_k^\prime(y)=
\epsilon_{1, 1}(\sqrt{3\floor{y/k}/4})+\epsilon_{k, 2}^\prime(\floor{y/k})+\epsilon_{k, 3}^\prime(y)$ with
\begin{equation}
\epsilon_{k, 3}^\prime(y)=\frac{k(\log (y/k)+2\eta_1-\log (16/3))^2}{y}.
\end{equation}
From $\epsilon_{k, 2}^\prime(\floor{y/k})\leq \epsilon_{1, 2}(\floor{y/k})$ and
$\epsilon_{k, 3}^\prime(y)\leq \epsilon_{1, 3}(y/k)$,
we can easily observe that $\epsilon_k^\prime(y)\leq \epsilon_1(y/k)$.
Thus, we obtain
\begin{equation}
\pi(x, x+y; k, a)<\frac{2y}{\vph(k)(\log (y/k)+0.8601)}
\end{equation}
for $y\geq 10^{10} k$.

Using Lemma \ref{lm22}, we have for $z_{1, I}<z\leq 10^5$,
\begin{equation}
\begin{split}
\int_1^z\frac{E_1(t)}{(z+t)^2}dt
< ~ & \sum_{i=1}^{I-1} \int_{z_{1, i}}^{z_{1, i+1}}\frac{c_{1, i}}{t^{1/2}(z+t)^2}dt+\int_{z_{1, I}}^z \frac{c_{1, I}}{t^{1/2}(z+t)^2}dt \\
= ~ & \sum_{i=1}^{I-1} \int_1^{z_{1, i+1}}\frac{c_{1, i}-c_{1, i+1}}{t^{1/2}(z+t)^2}dt+\int_1^z \frac{c_{1, I}}{t^{1/2}(z+t)^2}dt \\
= ~ & \sum_{i=1}^{I-1} \frac{(c_{1, i}-c_{1, i+1})(\sqrt{z_{1, i+1}}-1)}{z^2}+\frac{c_{1, I}}{z^{3/2}}\left(\frac{1}{2}+\frac{\pi}{4}\right).
\end{split}
\end{equation}

Taking $I=12$, we see that if $475\leq z\leq 10^5$, then
\begin{equation}
\int_1^z\frac{E_1(t)}{(z+t)^2}dt\leq \frac{4.791}{z^2}+\frac{0.3271}{z^{3/2}}
\end{equation}
and
\begin{equation}
\abs{\Delta_1(z)}<\frac{0.3271}{z^{1/2}}+\frac{4.791}{z}.
\end{equation}

Hence, putting
\begin{equation}
\tilde \epsilon_{1, 1}(z)=\frac{\eta_1}{1+z}+\frac{0.3271}{z^{1/2}}+\frac{4.791}{z}
\end{equation}
and 
$\tilde \epsilon_k(y)=\tilde\epsilon_{1, 1}(\sqrt{3\floor{y/k}/4})
+\epsilon_{k, 2}^\prime(\floor{y/k})+\epsilon_{k, 3}^\prime(y)$,
we have
\begin{equation}
\pi(x, x+y; k, a)\leq \frac{2y}{\vph(k)\left(\log(3ye^{2\eta_k}/(16k))-\tilde \epsilon_k(y)\right)}
\end{equation}
for $669671k\leq y\leq 10^{10}k$.
This implies that
\begin{equation}\label{eq33}
\pi(x, x+y; k, a)<\frac{2y}{\vph(k)(\log (y/k)+0.8601)}
\end{equation}
for $669671k\leq y\leq 10^{10}k$.

Now it remains to prove \eqref{eq33} for $k<y\leq 669671k$.
It follows from Lemma \ref{lm23} that
\begin{equation}
\pi(x, x+y; k, a)\leq \Pi(x, x+y; k, a, Q_r)+r<\frac{\vph(Q_r)y}{Q_rk}+2B_{1, r}+r
\end{equation}
for $r=1, \ldots, 10$.
Observing that for any real $z>0$ and any integer $k$ not divisible by $31$,
\begin{equation}
\Pi(z; k, a, Q_{11})=\Pi(z; k, a, Q_{10})-\Pi(z/31;k, a, Q_{10})
\end{equation}
and so on, we have
\begin{equation}
\pi(x, x+y; k, a)<\frac{\vph(Q_{10+r})y}{Q_{10+r}k}+2^r B_{1, 10}+10+r
\end{equation}
for any positive integer $r$.

Hence, we obtain
\begin{equation}
\pi(x, x+y; k, a)<\frac{\vph(Q_{16})y}{Q_{16}k}+2^6 B_{1, 10}+16<\frac{2y}{\vph(k)(\log (y/k)+0.8601)}
\end{equation}
for $111557k\leq y\leq 669671k$ and
\begin{equation}
\pi(x, x+y; k, a)<\frac{\vph(Q_{10})y}{Q_{10}k}+2B_{1, 10}+10<\frac{2y}{\vph(k)(\log (y/k)+0.8601)}
\end{equation}
for $381k\leq y<111557k$.
Similarly, taking $r=3$, we have \eqref{eq33} for $19k\leq y<381k$ and,
taking $r=2$, we have \eqref{eq33} for $14k\leq y<19k$.

Now it remains to prove \eqref{eq33} for $k\leq y<14k$.
Assume that $13k\leq y<14k$.
If $\pi(x, x+y; k, a)\leq 7$, then, since $y/k\geq 13$, we have
\begin{equation}
\begin{split}
\pi(x, x+y; k, a)\leq ~ & 7<\frac{2y}{k(\log(y/k)+0.8601)} \\
\leq ~ & \frac{2y}{\vph(k)(\log(y/k)+0.8601)}.
\end{split}
\end{equation}
If $13k\leq y<14k$ and $\pi(x, x+y; k, a)\geq 8$, then 
$k$ must be divisible by $Q_5=2310$ and,
observing that $\vph(k)(0.8601+\log 14)<2k$,
\begin{equation}
\pi(x, x+y; k, a)\leq 13\leq \frac{y}{k}<\frac{2y}{\vph(k)(\log(y/k)+0.8601)}.
\end{equation}

Iterating this argument for other cases, we eventually obtain \eqref{eq33} for $k\leq y<14k$.
This proves Theorem \ref{th12}.

Similarly, for each $k$, Theorem \ref{th11} yields \eqref{eq14} for $y\geq 10^{10}k$.
Taking $I$, $d_1$, $d_2$, and $y_0$ from Table \ref{tbl4},
we have
\begin{equation}\label{eq34}
\abs{\Delta_k(z)}<\frac{d_1}{z^{1/2}}+\frac{d_2}{z}
\end{equation}
for $z_{k, I}<z\leq 10^5$ to obtain \eqref{eq14} for $y_0\leq y<10^{10}k$.
Moreover, Lemma \ref{lm23} with $k$ and $r$ taken from Table \ref{tbl5} yields that
\eqref{eq14} for $y_1\leq y\leq y_2$, where $y_1$ and $y_2$ are taken from the corresponding column of
Table \ref{tbl5}.
We can confirm \eqref{eq14} for remaining values of $y$ by calculation.
This proves Theorem \ref{th13}.

\begin{table}
\caption{$A_{k, r}$, $B_{k, r}$ ($1\leq k\leq 7$, $r\leq 10$)}
\begin{center}
\begin{small}
\begin{tabular}{| c | c | c | c |}
 \hline
$k$ & $r$ & $A_{k, r}$ & $B_{k, r}$ \\
\hline
$1$ & $1$ & $-1/2$ & $1$ \\
$1$ & $2$ & $-2/3$ & $4/3$ \\
$1$ & $3$ & $-14/15$ & $28/15$ \\
$1$ & $4$ & $-53/35$ & $106/35$ \\
$1$ & $5$ & $-194/77$ & $388/77$ \\
$1$ & $6$ & $-3551/1001$ & $7102/1001$ \\
$1$ & $7$ & $-92552/17017$ & $185104/17017$ \\
$1$ & $8$ & $-2799708/323323$ & $5599416/323323$ \\
$1$ & $9$ & $-9747144/676039$ & $19494288/676039$ \\
$1$ & $10$ & $-58571113/2800733$ & $117142226/2800733$ \\
\hline
$3$ & $3$ & $-4/5$ & $8/5$ \\
$3$ & $4$ & $-44/35$ & $88/35$ \\
$3$ & $5$ & $-151/77$ & $302/77$ \\
$3$ & $6$ & $-3044/1001$ & $6088/1001$ \\
$3$ & $7$ & $-75588/17017$ & $151176/17017$ \\
$3$ & $8$ & $-2467811/323323$ & $4935622/323323$ \\
$3$ & $9$ & $-8107159/676039$ & $16214318/676039$ \\
$3$ & $10$ & $-54863825/2800733$ & $109727650/2800733$ \\
\hline
$4$ & $2$ & $-2/3$ & $4/3$ \\
$4$ & $3$ & $-14/15$ & $28/15$ \\
$4$ & $4$ & $-53/35$ & $106/35$ \\
$4$ & $5$ & $-194/77$ & $388/77$ \\
$4$ & $6$ & $-3551/1001$ & $7102/1001$ \\
$4$ & $7$ & $-92552/17017$ & $185104/17017$ \\
$4$ & $8$ & $-2799708/323323$ & $5599416/323323$ \\
$4$ & $9$ & $-9747144/676039$ & $19494288/676039$ \\
$4$ & $10$ & $-58571113/2800733$ & $117142226/2800733$ \\
\hline
$5$ & $4$ & $-8/7$ & $16/7$ \\
$5$ & $5$ & $-164/77$ & $328/77$ \\
$5$ & $6$ & $-2815/1001$ & $5630/1001$ \\
$5$ & $7$ & $-74797/17017$ & $149594/17017$ \\
$5$ & $8$ & $-2249157/323323$ & $4498314/323323$ \\
$5$ & $9$ & $-7949104/676039$ & $15898208/676039$ \\
$5$ & $10$ & $-54009126/2800733$ & $108018252/2800733$ \\
\hline
$7$ & $5$ & $-53/33$ & $106/33$ \\
$7$ & $6$ & $-362/143$ & $724/143$ \\
$7$ & $7$ & $-10806/2431$ & $21612/2431$ \\
$7$ & $8$ & $-297163/46189$ & $594326/46189$ \\
$7$ & $9$ & $-1026369/96577$ & $2052738/96577$ \\
$7$ & $10$ & $-44136180/2800733$ & $88272360/2800733$ \\
\hline
\end{tabular}
\label{tbl3a}
\end{small}
\end{center}
\end{table}

\begin{table}
\caption{$A_{k, r}$, $B_{k, r}$ ($11\leq k\leq 15$, $r\leq 10$)}
\begin{center}
\begin{small}
\begin{tabular}{| c | c | c | c |}
 \hline
$k$ & $r$ & $A_{k, r}$ & $B_{k, r}$ \\
\hline
$11$ & $6$ & $-1126/455$ & $2252/455$ \\
$11$ & $7$ & $-29817/7735$ & $59634/7735$ \\
$11$ & $8$ & $-1013053/146965$ & $2026106/146965$ \\
$11$ & $9$ & $-31692926/3380195$ & $63385852/3380195$ \\
$11$ & $10$ & $-211063714/14003665$ & $422127428/14003665$ \\
\hline
$12$ & $3$ & $-4/5$ & $8/5$ \\
$12$ & $4$ & $-44/35$ & $88/35$ \\
$12$ & $5$ & $-151/77$ & $302/77$ \\
$12$ & $6$ & $-3044/1001$ & $6088/1001$ \\
$12$ & $7$ & $-75588/17017$ & $151176/17017$ \\
$12$ & $8$ & $-2467811/323323$ & $4935622/323323$ \\
$12$ & $9$ & $-8107159/676039$ & $16214318/676039$ \\
$12$ & $10$ & $-54863825/2800733$ & $109727650/2800733$ \\
\hline
$13$ & $7$ & $-5029/1309$ & $10058/1309$ \\
$13$ & $8$ & $-165135/24871$ & $330270/24871$ \\
$13$ & $9$ & $-552247/52003$ & $1104494/52003$ \\
$13$ & $10$ & $-3423099/215441$ & $6846198/215441$ \\
\hline
$15$ & $4$ & $-6/7$ & $12/7$ \\
$15$ & $5$ & $-109/77$ & $218/77$ \\
$15$ & $6$ & $-2386/1001$ & $4772/1001$ \\
$15$ & $7$ & $-58133/17017$ & $116266/17017$ \\
$15$ & $8$ & $-1786286/323323$ & $3572572/323323$ \\
$15$ & $9$ & $-5798301/676039$ & $11596602/676039$ \\
$15$ & $10$ & $-35448898/2800733$ & $70897796/2800733$ \\
\hline
\end{tabular}
\label{tbl3b}
\end{small}
\end{center}
\end{table}

\begin{table}
\caption{Constants related to \eqref{eq34}}
\begin{center}
\begin{small}
\begin{tabular}{| c | c | c | c | c |}
\hline
$k$ & $I$ & $d_1$ & $d_2$ & $y_0$ \\
\hline
$1$ & $12$ & $0.3271$ & $4.7910$ & $669671$ \\
$3$ & $12$ & $0.2670$ & $3.6639$ & $819829$ \\
$4$ & $12$ & $0.2102$ & $1.6947$ & $1261951$ \\
$5$ & $13$ & $0.3244$ & $4.0727$ & $1481037$ \\
$7$ & $12$ & $0.4073$ & $2.4615$ & $2467293$ \\
$8$ & $12$ & $0.2102$ & $1.6947$ & $2523903$ \\
$9$ & $12$ & $0.2670$ & $3.6639$ & $2549486$ \\
$11$ & $16$ & $0.3800$ & $4.9236$ & $4873274$ \\
$12$ & $10$ & $0.1812$ & $1.0737$ & $1559201$ \\
$13$ & $16$ & $0.3349$ & $3.9324$ & $5857271$ \\
$15$ & $9$ & $0.3252$ & $1.8005$ & $2213863$ \\
$16$ & $12$ & $0.2102$ & $1.6947$ & $4746576$ \\
\hline
\end{tabular}
\label{tbl4}
\end{small}
\end{center}
\end{table}

\begin{table}
\caption{Lemma \ref{lm23} yields \eqref{eq14} for $y_1\leq y\leq y_2$ ($k\leq 16$)}
\begin{center}
\begin{small}
\begin{tabular}{| c | c | c | c |}
\hline
$k$ & $r$ & $y_1$ & $y_2$ \\
\hline
$1$ & $16$ & $111557$ & $669671$ \\
$1$ & $10$ & $381$ & $111557$ \\
$1$ & $3$ & $19$ & $381$ \\
$1$ & $2$ & $14$ & $19$ \\
\hline
$3$ & $15$ & $82827$ & $819829$ \\
$3$ & $9$ & $463$ & $82827$ \\
$3$ & $3$ & $29$ & $463$ \\
$3$ & $1$ & $12$ & $29$ \\
\hline
$4$ & $16$ & $232135$ & $1261951$ \\
$4$ & $10$ & $753$ & $232135$ \\
$4$ & $3$ & $24$ & $753$ \\
$4$ & $1$ & $5$ & $24$ \\
\hline
$5$ & $15$ & $172030$ & $1481037$ \\
$5$ & $9$ & $939$ & $172030$ \\
$5$ & $2$ & $52$ & $939$ \\
\hline
$7$ & $15$ & $176230$ & $2467293$ \\
$7$ & $8$ & $753$ & $176230$ \\
$7$ & $2$ & $92$ & $753$ \\
\hline
$8$ & $16$ & $464270$ & $2523903$ \\
$8$ & $10$ & $1505$ & $464270$ \\
$8$ & $3$ & $48$ & $1505$ \\
$8$ & $1$ & $9$ & $48$ \\
\hline
\end{tabular}
\begin{tabular}{| c | c | c | c |}
\hline
$k$ & $r$ & $y_1$ & $y_2$ \\
\hline
$9$ & $15$ & $243117$ & $2549486$ \\
$9$ & $9$ & $1371$ & $243117$ \\
$9$ & $3$ & $82$ & $1371$ \\
$9$ & $1$ & $33$ & $82$ \\
\hline
$11$ & $16$ & $801217$ & $4873274$ \\
$11$ & $10$ & $2932$ & $801217$ \\
$11$ & $3$ & $212$ & $2932$ \\
$11$ & $2$ & $173$ & $212$ \\
\hline
$12$ & $15$ & $170643$ & $1559201$ \\
$12$ & $9$ & $898$ & $170643$ \\
$12$ & $3$ & $29$ & $898$ \\
$12$ & $1$ & $13$ & $29$ \\
\hline
$13$ & $16$ & $1048486$ & $5857271$ \\
$13$ & $10$ & $3697$ & $1048486$ \\
$13$ & $3$ & $257$ & $3697$ \\
$13$ & $2$ & $213$ & $257$ \\
\hline
$15$ & $15$ & $210062$ & $2213863$ \\
$15$ & $9$ & $1485$ & $210062$ \\
$15$ & $1$ & $51$ & $1485$ \\
\hline
$16$ & $15$ & $295860$ & $4746576$ \\
$16$ & $8$ & $1140$ & $295860$ \\
$16$ & $3$ & $94$ & $1140$ \\
$16$ & $1$ & $17$ & $94$ \\
\hline
\end{tabular}
\label{tbl5}
\end{small}
\end{center}
\end{table}

{}
\end{document}